\documentclass[12pt]{amsart}
\usepackage{amsmath,amssymb, amscd, graphicx}
\usepackage[all]{xy}

\makeatletter
\@addtoreset{equation}{section}
\makeatother
\setlength{\textwidth}{16 truecm}
\setlength{\hoffset}{-2.0 truecm}
\usepackage{enumerate}

\usepackage{color}
\usepackage{ulem}
\usepackage{comment}

\makeatletter
\@addtoreset{equation}{section}
\makeatother
\usepackage{enumerate}

\def\ee{{\mathrm{e}}}

\def\RR{{\mathbb R}}
\def\EE{{\mathbb E}}

\def\cC{{\mathcal{C}}}

\def\cK{{\mathcal{K}}}
\def\fM{{\mathfrak{M}}}

\def\SE{{\mathrm{SE}}}
\def\SO{{\mathrm{SO}}}
\def\se{{\mathfrak{se}}}
\def\so{{\mathfrak{so}}}
\def\fg{{\mathfrak{g}}}

\def\coker{{\mathrm{coker}}}
\def\img{{\mathrm{img}}}

\newtheorem{theorem}{Theorem}[section]
\newtheorem{definition}[theorem]{Definition}

\newtheorem{proposition}[theorem]{Proposition}
\newtheorem{corollary}[theorem]{Corollary}
\newtheorem{remark}[theorem]{Remark}
\newtheorem{lemma}[theorem]{Lemma}



\def\book#1{\rm{#1}, }
\def\paper#1{\textit{#1}, }
\def\jour#1{\rm{#1}, }
\def\yr#1{({\rm{#1}) }}
\def\vol#1{\textbf{#1}}
\def\pages#1{\rm{#1}}
\def\page#1{\rm{#1}}

\def\publaddr#1{\rm{#1}, }
\def\publ#1{\rm{#1}, }
\def\by#1{{\rm{#1}, }}

\begin{document}

\title{Mathematics in Caging of Robotics} 

\author{
  Hiroyasu Hamada,
  Satoshi Makita,
  Shigeki Matsutani
}

\maketitle

\date{\today}

\begin{abstract}
It is a crucial problem in robotics field
to cage an object using robots like multifingered hand.
However the problem what is the caging
for general geometrical objects and robots
has not been well-described in mathematics
though there were many rigorous studies on the methods how to
cage an object by certain robots.
In this article, we investigate the caging problem 
more mathematically
and describe the problem in terms of recursion of
the simple euclidean moves. Using the description,
we show that the caging has
the degree of difficulty 
which is closely related to a combinatorial
problem and a wire puzzle.   
It implies that in order to capture an object by
caging,
from a practical viewpoint the difficulty plays an important
role.  
\end{abstract}


{\bf{keywords}}
caging, euclidean move, wire puzzle, 
robotics

{\bf{AMS:}}
51M04, 
57N15, 
57N35, 
70B15, 
70E60, 

\section{Introduction}

In robotics fields, caging is a type of grasping where robots capture an
object by surrounding or hooking it.
Thus the caging problem based on the shape of the robots and the object
is addressed on 
geometrical representation.
Its mathematical description has been partially studied with
focusing on methodology how to cage an object by certain robots.
Though it is rigorous, it is not for 
arbitrary target objects and robots.
In this article, we propose an essential of caging to
describe arbitrary target objects and robots from a mathematical viewpoint,
and then it naturally leads us to a degree of difficulty of
escaping and caging. It is a novel concept of the caging which
is connected with practical approaches. 

Caging or holding an object has been discussed in mathematics field such
as \cite{C,Z}, and has been applied to robotic manipulation in parallel.
Rimon and Blake raised a caging by two circular robots driven by one
parameter in two dimensional planar space \cite{RB}, and formulated
its conditions.
Wang and Kumar
proposed caging by multi-robot cooperation with mathematical abstract
formulas \cite{WK}. 
More than three dimensional caging problem is formulated by \cite{PS},
although only circular and spherical robots are referred.
There studies discuss existence of object's free movable space closed by
the robots.
Hence path connectivity of the free space for the captured object
\cite{RMF} is an important matter to investigate whether the object is
caged or able to escape.
As mentioned above, confinement of caging formation is studied
by previous works such as \cite{RB, PS, RMF}, particularly for problems
of two dimensional caging.
Additionally although path connectivity can be examined by using
probabilistic search algorithm \cite{DS,MP}, 
the difficulty of caging constraint is not quantitatively qualified.

Therefore this paper aims to describe caging problem in robotics in the
following viewpoint:
 to apply the formulation to arbitrary robots and objects;
 to reveal difficulty of caging constraint mathematically.

From a mathematical viewpoint, we recall the fundamental
fact that a compact connected $n$-dimensional
topological manifold $M$ in $n$-dimensional
euclidean space $\EE^n$ could be wrapped by the $(n-1)$-dimensional
sphere $S^{n-1}$, which corresponds
to holding in the real world.
This fact is described well by the homotopy group
$\pi_{n-1}$. It is based on the mathematical fact that $S^{n-1}$
in $\EE^n$
divides $\EE^n$ into its inner side and outer side. 
The subspace $M$ has the configuration space $[M]$ by the euclidean
moves $\psi$, i.e., $[M]=\{\psi(M)\}$.
Let us fix the sphere $S^{n-1}$ and then 
it divides $[M]$ to the part 
 $[M]_{in}$ of 
the inner side  
of $S^{n-1}$, that of the outer side, $[M]_{out}$, and 
the intersection part $[M]_{int}$, i.e., 
$[M]=[M]_{in}\coprod
[M]_{out}\coprod
[M]_{int}$ and
$[M]_{int}:= \{M' \in [M]\ | \ M'\cap S^{n-1}\neq \emptyset\}$.
If $[M]_{in}$ is not empty, it means 
that $S^{n-1}$ holds $M \in [M]_{in}$.

On the other hand,
 caging is to divide the configurations space $[M]$ 
using an $(n-2)$ dimensional topological manifold $\cK$.
Then there arises a problem
how $\cK$ with 2-codimension
can divide the configuration space $[M]$ mathematically.
This is a fundamental problem on caging.

There are many mathematical studies on this problem
to cage some proper geometrical objects using special $\cK$
as mentioned above.
Further
Fruchard and Zamfirescu \cite{F,Z}
geometrically studied the geometrical conditions 
whether a circle holds a convex object $[M]$.
Rodriguez, Mason and Ferry investigated 
geometrical and topological properties the multifinger cages rigorously
\cite{RMF}.  

However there was no investigation on the
above fundamental problem for general $M$ and $\cK$.
Though the path connectivity of an object $M$ in $\EE^n \setminus \cK$
 studied well in  \cite{DS,MP,RMF} is an essential property of 
caging, 
it is not sufficient.
For example if we regard a wire puzzle constituting two pieces as
a pair of 
a robot $\cK$ and a target object $M$, $M$ is not caged by $\cK$
because there is a path between $M$ {\lq\lq}in{\rq\rq} of $\cK$
and that of the outer side of $\cK$. However a wire puzzle should
be considered as an example of caging and holding from a practical
viewpoint. 
It is related to the probabilistic treatment of the path connectivity
\cite{DS,MP}. However the path space is
very complicate in general \cite{BT} and it is very difficult to
assign a probabilistic measure in the path space.

In order to introduce a degree of escaping
mathematically,
in this article, we describe the fundamental problem more mathematically,
and then we show that caging is represented in terms of recursion of
the simple euclidean moves,  i.e., the piecewise euclidean move
defined in Definitions \ref{def:PWEM} and \cite{def:ell-redu}.
The move means
that caging is classified countably and naturally leads us to 
the degree of difficulty of escaping of $[M]$ from $\cK$.
It is closely related to a combinatorial explosion and
the wire puzzle.
It means that there might be
a difference between
practical caging and complete caging.
When we capture a complicate object by
caging, we propose that the difficulty 
should be proactively considered 
from a practical viewpoint.
Further if we treat the euclidean moves probabilistically, we could
assign a natural measure on the moves.

\section{Mathematical Preliminary}

Let us consider the $n$-dimensional real euclidean space
$\EE^n$ and the $n$-dimensional euclidean group 
$\SE(n):=\RR^n\rtimes \SO(n)$.
The element $g$ of $\SE(n)$ acts on each point $x$ of $\EE^n$
by $\Phi_g(x) \in \EE^n$ such that 
for $g, g' \in \SE(n)$, 
$\Phi_{g' g}(x) = \Phi_{g'} \Phi_{g}(x)$ and
for the identity element $e \in \SE(n)$,
$\Phi_{e}(x) = id (x) =x$ \cite{AM,G}.
The action $\Phi_g$ of $g\in\SE(n)$ on $\EE^n$ has
the matrix representation:
for $g$, there are element $u\in \RR^n$ and $A\in\SO(n)$ such that
for $x\in \EE^n$, 
\begin{displaymath}
\Phi_g(x) = 
\begin{pmatrix}
1 & 0 \\
u & A \end{pmatrix}
\begin{pmatrix}1 \\ x
\end{pmatrix}
=
\begin{pmatrix}1 \\
 A x +u
\end{pmatrix}.
\end{displaymath}

In this article, let us refer a $\ell$-dimensional
topological manifold in $\EE^n$, {\it{$\ell$-subspace}}.
We say that the $\ell$-subspaces $M$ and $M'$ of $\EE^n$
are {\it{congruent}} if there
is an element $g$ of $\SE(n)$ such that $\Phi_g(M)=M'$.
We denote it by $M \simeq M'$.
Let $[M]$ be the configuration space of $M$;
$[M]:=\{\Phi_g(M)\ | \ g \in \SE(n)\}$.
The quotient space of the set of the $n$-subspaces 
divided by $\SE(n)$
is denoted by $\fM$, which classifies the shape of subspace in
$\EE^n$. We call $\fM$ 
{\it{moduli of the shapes}}; $[M] \in \fM$.

We consider a continuous map $\psi :[0,1] \to \SE(n)$,
i.e., $\psi \in \cC^0([0,1],\SE(n))$ with a fixing point
$\psi(0)=id$,
where $\cC^0(N,F)$ means the set of $F$-valued continuous
functions over $N$.

For given $\psi \in \cC^0([0,1],\SE(n))$, 
the action $\Phi_{\psi}$ on $\EE^n$
parameterized by $t\in [0,1]$
is called 
{\it{orbit by $\psi$}}.

\begin{lemma}
For an $n$-subspace $M \subset \EE^n$, 
the action $\Phi_{\psi(t)} (M)$ induces a congruent 
family $\{\Phi_{\psi(t)}(M)\}_{t\in [0,1]}$ 
\end{lemma}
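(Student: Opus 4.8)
The plan is to unwind the definitions: a "congruent family" must mean a family of $n$-subspaces each member of which is congruent to $M$, and the family should vary continuously in $t$. So the proof really has just two moving parts. First I would observe that for each fixed $t \in [0,1]$, the element $\psi(t)$ lies in $\SE(n)$, hence $\Phi_{\psi(t)}$ is the action of a single group element; by the very definition of congruence given just above (namely $M \simeq M'$ iff $\Phi_g(M) = M'$ for some $g \in \SE(n)$), we get $\Phi_{\psi(t)}(M) \simeq M$ for every $t$. In particular, taking $t = 0$ and using $\psi(0) = id$ together with $\Phi_e(x) = x$, the family starts at $M$ itself, and every later member is congruent to it, so the whole family $\{\Phi_{\psi(t)}(M)\}_{t \in [0,1]}$ consists of mutually congruent $n$-subspaces.

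Second, I would address the continuity/well-definedness part: each $\Phi_{\psi(t)}(M)$ is again an $n$-dimensional topological manifold embedded in $\EE^n$, i.e., an $n$-subspace. This is because $\Phi_{\psi(t)}$ is a homeomorphism of $\EE^n$ (its inverse is $\Phi_{\psi(t)^{-1}}$, using the composition law $\Phi_{g'g} = \Phi_{g'}\Phi_g$ and $\Phi_e = id$), and the homeomorphic image of an $n$-manifold is an $n$-manifold. For the continuous dependence on $t$, I would use that $\psi \in \cC^0([0,1], \SE(n))$ and that the action map $\SE(n) \times \EE^n \to \EE^n$, $(g,x) \mapsto \Phi_g(x)$, is continuous — this is transparent from the matrix representation displayed in the text, where the entries of $\begin{pmatrix} 1 & 0 \\ u & A \end{pmatrix}$ depend continuously on $g$ and the map is just matrix-vector multiplication. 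Composing, $(t,x) \mapsto \Phi_{\psi(t)}(x)$ is continuous, so $\{\Phi_{\psi(t)}(M)\}_{t\in[0,1]}$ is a genuine (continuous) isotopy-like family inside the configuration space $[M]$, not merely a set of unrelated congruent copies.

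There is essentially no hard part here — this is a bookkeeping lemma whose role is to license the phrase "congruent family" and to set up the later notion of an orbit by $\psi$ sweeping out a path in $[M]$. The only thing requiring a word of care is pinning down what "congruent family" is being asserted: I would state explicitly at the start of the proof that I interpret it as (a) each $\Phi_{\psi(t)}(M) \in [M]$, and (b) $t \mapsto \Phi_{\psi(t)}(M)$ is continuous into $[M]$ with the quotient/appropriate topology, and then verify (a) and (b) as above. If the intended meaning is weaker — merely that all members are pairwise congruent — then only the one-line argument from the definition of $\simeq$ is needed and the continuity remark can be dropped; I would include it anyway since it costs nothing and is what makes the lemma useful downstream.
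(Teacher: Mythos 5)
Your argument is correct: for each fixed $t$ the element $\psi(t)$ lies in $\SE(n)$, so $\Phi_{\psi(t)}(M)\simeq M$ by the definition of congruence, and continuity in $t$ follows from $\psi\in\cC^0([0,1],\SE(n))$ together with the continuity of the matrix action. The paper states this lemma without any proof, treating it as immediate from the definitions, and your write-up is exactly the routine unwinding that was intended.
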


For the Lie group $\SE(n)$,
there is its Lie algebra $\se(n)$ whose element
$\fg$ satisfies $\exp(\fg) \in \SE(n)$;
for the economy of notations, we use the 
same notation $\fg$ for its matrix representation. 
\begin{lemma} \label{lm:LieAlg}
For $\displaystyle{
\fg=
\begin{pmatrix}
0 & 0 \\
\xi & \omega \end{pmatrix} \in \se(n),
}$
where $\omega \in \so(n)$ and $\xi \in  \RR^n$,
\begin{displaymath}
\exp(\fg)=
\ee^{\fg}=
\begin{pmatrix}
1 & 0 \\
v(\omega)
\xi  & \ee^\omega \end{pmatrix},
\end{displaymath}
where 
$v(\omega):=
\displaystyle{
\sum_{k=0}^\infty 
\frac{1}{(k+1)!}\omega^{k} = 
\int^1_0 \ee^{s\omega} d s}$,
\begin{displaymath}
\omega v(\omega) = \ee^\omega - I_n,
\end{displaymath}
where $I_n$ is the $n \times n$ unit matrix.
If $\omega$ is regular,
$v(\omega)=
(\ee^\omega-I_n)\omega^{-1}$.
\end{lemma}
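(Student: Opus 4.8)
The plan is to compute $\ee^{\fg}$ directly from the exponential series, using the block form of $\fg$. First I would show by induction on $k$ that
$$
\fg^{k}=
\begin{pmatrix}
0 & 0 \\
\omega^{k-1}\xi & \omega^{k}
\end{pmatrix}
\qquad (k\ge 1),
$$
with $\fg^{0}=I_{n+1}$. The case $k=1$ is the definition of $\fg$, and the inductive step is the single block product $\fg^{k+1}=\fg\cdot\fg^{k}$: the top row remains zero, the bottom-left entry becomes $\omega\cdot\omega^{k-1}\xi=\omega^{k}\xi$, and the bottom-right entry becomes $\omega\cdot\omega^{k}=\omega^{k+1}$.

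Next I would insert this into $\ee^{\fg}=\sum_{k\ge 0}\fg^{k}/k!$. The series converges absolutely since $\|\omega^{k-1}\xi\|/k!\le\|\omega\|^{k-1}\|\xi\|/k!$ and $\|\omega^{k}\|/k!\le\|\omega\|^{k}/k!$, so the sum can be taken block by block. The $(1,1)$ block is $1$, the upper-right block is $0$, the lower-right block is $\sum_{k\ge 0}\omega^{k}/k!=\ee^{\omega}$, and the lower-left block is $\bigl(\sum_{k\ge 1}\omega^{k-1}/k!\bigr)\xi$. Re-indexing the last sum with $j=k-1$ gives $\sum_{j\ge 0}\omega^{j}/(j+1)!=v(\omega)$, which is exactly the asserted matrix form of $\ee^{\fg}$.

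It remains to check the three properties of $v(\omega)$. For the integral representation, I would expand $\ee^{s\omega}=\sum_{k\ge 0}s^{k}\omega^{k}/k!$ and integrate term by term over $[0,1]$, which is legitimate by uniform convergence in $s$; using $\int_{0}^{1}s^{k}\,ds=1/(k+1)$ recovers $\sum_{k\ge 0}\omega^{k}/(k+1)!=v(\omega)$. For $\omega v(\omega)=\ee^{\omega}-I_{n}$, multiplying the series of $v(\omega)$ on the left by $\omega$ and re-indexing yields $\sum_{k\ge 0}\omega^{k+1}/(k+1)!=\sum_{j\ge 1}\omega^{j}/j!=\ee^{\omega}-I_{n}$; the identical computation on the right shows $v(\omega)\omega=\ee^{\omega}-I_{n}$ as well, so $\omega$ and $v(\omega)$ commute (both being power series in $\omega$). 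Finally, if $\omega$ is regular, multiplying this identity by $\omega^{-1}$ on either side gives $v(\omega)=(\ee^{\omega}-I_{n})\omega^{-1}$.

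There is no genuine obstacle here; the lemma is a bookkeeping exercise with the exponential series. The only places deserving care are the index shift $k\mapsto k-1$ in the lower-left block, so that its coefficient comes out as $v(\omega)$ rather than something off by a factorial, and the justification for integrating the series for $\ee^{s\omega}$ termwise — both entirely routine.
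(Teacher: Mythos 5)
Your proof is correct and follows the same route as the paper: compute $\fg^k=\begin{pmatrix}0&0\\ \omega^{k-1}\xi&\omega^k\end{pmatrix}$ and sum the exponential series block by block. The paper states only this one-line computation and omits the routine verifications of the integral formula and the identity $\omega v(\omega)=\ee^\omega-I_n$, which you carry out explicitly and correctly.
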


\begin{proof}
The straightforward computation leads
$\displaystyle{
\fg^k=\begin{pmatrix}
0 & 0 \\
\omega^{k-1}\xi & \omega^k \end{pmatrix}
}$
and thus we have the result.
\end{proof}

Here we recall the properties of $\so(n)$ \cite{Kn}:
\begin{lemma} \label{lm:LieAlg_so(n)}
\begin{enumerate}
\item $\displaystyle{\dim_\RR \so(n) = \frac{n(n-1)}{2}}$,

\item the matrix representation of an element $\omega$ of $\so(n)$ 
is given
by $(\omega_{i j})$ such that ${}^t\omega=-\omega$,
i.e., $\omega_{i j}=-\omega_{j i}$ ($i,j = 1, 2, \ldots, n$),

\item the maximal rank of the matrix representation of $\so(n)$ is
$(n-1)$ if $n=odd$ and $n$ otherwise,
and

\item for the matrix representation
$\omega \in \so(n)$,
we have the natural decomposition,
\begin{displaymath}
 \RR^n = \omega(\RR^n) \oplus \ker\omega
\end{displaymath}
by considering $\omega:\RR^n \to \RR^n$, i.e.
 the cokernel
 $\coker\,\omega$ agrees with the kernel of $\omega$.

\end{enumerate}
\end{lemma} 

\begin{proof}
1, 2, and 3 are obvious as in \cite[p.63]{Kn}.
Using the euclidean inner product,
let us show $\omega(\RR^n)^\perp=\ker \omega$ which is
equivalent with 4.
Let us consider an element $x$ of 
$\omega(\RR^n)^\perp=(\img \omega)^\perp$, which means
$(x, \omega y)=0$ for every $y \in \RR^n$,
and equivalently $(^t \omega x, y)$ vanishes.
The element $x$ must belongs to $\ker {}^t \omega$ 
or $\ker\, \omega$  because of ${}^t\omega= - \omega$. 
\end{proof}

Since for $\fg \in \se(n)$ and $t\in [0,1]$, $t\fg$ belongs to
$\se(n)$, 
by using this relation between $\SE(n)$ and $\se(n)$,
we consider a euclidean move
$\psi(t) = \exp(t \fg)$, which we call
{\it{the simple euclidean move}}. 

\begin{lemma}
For $t\in[0,1]$ and 
$\displaystyle{
\fg=
\begin{pmatrix}
0 & 0 \\
\xi & \omega \end{pmatrix} \in \se(n),
}$
\begin{displaymath}
\exp(t \fg)=
\begin{pmatrix}
1 & 0 \\
v_t(\omega)\xi  & \ee^{t\omega} \end{pmatrix}.
\end{displaymath}
where $v_t(\omega)=t v(t\omega) =
\displaystyle{\int^t_0 \ee^{s\omega} d s}$
satisfying
\begin{displaymath}
\omega v_t(\omega) = \ee^{t\omega} - I_n.
\end{displaymath}
If $\omega$ is regular,
$v_t(\omega)=(\ee^{t\omega}-I_n)\omega^{-1}$.
\end{lemma}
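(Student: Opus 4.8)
The plan is to deduce the statement directly from Lemma \ref{lm:LieAlg}, applied to the rescaled generator $t\fg$, together with an elementary change of variables in the integral representation of $v$.

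First I would note that $\se(n)$ is a linear subspace, so for $t\in[0,1]$ the element
$t\fg=\begin{pmatrix} 0 & 0 \\ t\xi & t\omega \end{pmatrix}$
again lies in $\se(n)$, with rotational part $t\omega\in\so(n)$ and translational part $t\xi\in\RR^n$. Applying Lemma \ref{lm:LieAlg} verbatim to $t\fg$ in place of $\fg$ yields
\[
\exp(t\fg)=\ee^{t\fg}=\begin{pmatrix} 1 & 0 \\ v(t\omega)\,t\xi & \ee^{t\omega} \end{pmatrix},
\]
so the only thing left is to identify the lower-left block $v(t\omega)\,t\xi$ with $v_t(\omega)\xi$, i.e. to check that $v_t(\omega):=t\,v(t\omega)$ has the asserted integral form and satisfies the stated identity.

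Next, using the integral expression for $v$ supplied by Lemma \ref{lm:LieAlg}, I would write $v(t\omega)=\int_0^1 \ee^{s(t\omega)}\,ds$, hence $t\,v(t\omega)=t\int_0^1 \ee^{st\omega}\,ds$, and the substitution $s'=st$ turns this into $\int_0^t \ee^{s'\omega}\,ds'=v_t(\omega)$. Multiplying on the left by $\omega$ and moving it inside the integral, where $\omega\ee^{s\omega}=\frac{d}{ds}\ee^{s\omega}$, the fundamental theorem of calculus gives $\omega v_t(\omega)=\ee^{t\omega}-I_n$; when $\omega$ is regular this rearranges to $v_t(\omega)=(\ee^{t\omega}-I_n)\omega^{-1}$, the two factors commuting because $\omega$ commutes with $\ee^{t\omega}$.

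I do not expect any real obstacle: the statement is essentially a reparametrized restatement of Lemma \ref{lm:LieAlg}. The only points deserving a word of care are the term-by-term differentiation and integration of the matrix exponential series (legitimate by its normal convergence on compact $t$-intervals, exactly as in the proof of Lemma \ref{lm:LieAlg}) and the bookkeeping in the change of variables $s'=st$.
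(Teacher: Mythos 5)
Your proof is correct and follows essentially the same route as the paper: apply Lemma \ref{lm:LieAlg} to the rescaled generator $t\fg$, set $v_t(\omega)=t\,v(t\omega)$, and use the substitution $s\mapsto st$ in the integral representation to obtain $v_t(\omega)=\int_0^t \ee^{s\omega}\,ds$ and the identity $\omega v_t(\omega)=\ee^{t\omega}-I_n$. Your write-up is in fact slightly more explicit than the paper's about why the lower-left block equals $v_t(\omega)\xi$ and why the identity holds, but there is no substantive difference in approach.
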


\begin{proof}
$v_t(\omega)=t v(t\omega) =
\displaystyle{\sum_{k=1}^\infty \frac{1}{k!}
t^{k} \omega^{k-1}}$
$\displaystyle{=t\int^1_0 \ee^{st\omega} d s}$.
By replacing $s$ with $st$,
$\displaystyle{v_t(\omega) =
\int^t_0 \ee^{s\omega} d s}$.
\end{proof}

Though the action of the euclidean move
should be regarded as
a {\lq\lq}rotation{\rq\rq} in the projective space
$P\RR^n$ via SL($n,\RR^{n+1}$) in $\RR^{n+1}$,
the simple euclidean move is given by
the following lemma: 

\begin{lemma} \label{lmm:RT}
For $t\in[0,1]$ and 
$\displaystyle{
\fg=
\begin{pmatrix}
0 & 0 \\
\xi & \omega \end{pmatrix} \in \se(n)}$,
the orbit of $x\in \EE^n$ by 
the simple euclidean move $\psi(t):=\ee^{t\fg}$, i.e., 
$
\psi(t)x 
=\ee^{t \omega} x + v_t(\omega) \xi,
$
 is reduced to 
\begin{equation}
 \psi(t)x = \ee^{t \omega} (x + \xi_0) - \xi_0 + \xi_1 t,
\label{eq:se_SE}
\end{equation}
where $\xi =  \omega \xi_0 + \xi_1$
for $\xi_1 \in \ker \omega$ and $\xi_0 \in \RR^n$.
It means the following:
\begin{enumerate}
\item
If $\xi_1$ vanishes, $\psi(t)$ means
a rotation by $\ee^{t \omega}$,
at a center $-\xi_0\in \EE^n$
by identifying $\RR^n$ with $\EE^n$ as a set. 

\item
If $\omega$ vanishes, $\psi(t)$ is the translation
\begin{displaymath}
\psi(t)x = x + \xi t.
\end{displaymath}

\item If both $\omega$ and $\xi_1$ do not vanish,
it is
a mixed move of the rotation and the translation.
\end{enumerate}
\end{lemma}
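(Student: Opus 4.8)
The plan is to start from the explicit formula $\psi(t)x = \ee^{t\omega}x + v_t(\omega)\xi$ supplied by the previous lemma and to rewrite the affine term $v_t(\omega)\xi$ using the decomposition $\RR^n = \omega(\RR^n)\oplus\ker\omega$ from Lemma~\ref{lm:LieAlg_so(n)}(4). Writing $\xi = \omega\xi_0 + \xi_1$ with $\xi_1\in\ker\omega$ and $\xi_0\in\RR^n$, I would split $v_t(\omega)\xi = v_t(\omega)\omega\xi_0 + v_t(\omega)\xi_1$ and treat the two pieces separately.

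For the first piece, the identity $\omega v_t(\omega) = \ee^{t\omega} - I_n$ from the previous lemma, together with the fact that $v_t(\omega)$ is a power series in $\omega$ (hence commutes with $\omega$), gives $v_t(\omega)\omega\xi_0 = (\ee^{t\omega} - I_n)\xi_0$. For the second piece, since $\omega\xi_1 = 0$ we have $\ee^{s\omega}\xi_1 = \xi_1$ for all $s$, so $v_t(\omega)\xi_1 = \bigl(\int_0^t \ee^{s\omega}\,ds\bigr)\xi_1 = t\,\xi_1$. Adding these and substituting back,
\begin{displaymath}
\psi(t)x = \ee^{t\omega}x + (\ee^{t\omega}-I_n)\xi_0 + t\,\xi_1
= \ee^{t\omega}(x+\xi_0) - \xi_0 + \xi_1 t,
\end{displaymath}
which is precisely \eqref{eq:se_SE}. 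The interpretations (1)--(3) then follow by inspection: if $\xi_1 = 0$ the map is $x \mapsto \ee^{t\omega}(x+\xi_0) - \xi_0$, i.e.\ conjugation of the linear rotation $\ee^{t\omega}$ by the translation sending $-\xi_0$ to the origin, hence a rotation about the center $-\xi_0$; if $\omega = 0$ then $\ee^{t\omega} = I_n$, the $\xi_0$ terms cancel, $\xi_1 = \xi$, and $\psi(t)x = x + \xi t$; and the remaining case is the superposition of both effects.

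The only genuine subtlety — rather than an obstacle — is the non-uniqueness of the splitting $\xi = \omega\xi_0 + \xi_1$: the vector $\xi_0$ is determined only modulo $\ker\omega$. I would remark that this ambiguity is harmless, since changing $\xi_0$ by an element $\eta\in\ker\omega$ changes $\ee^{t\omega}(x+\xi_0) - \xi_0$ by $\ee^{t\omega}\eta - \eta = 0$, so the orbit is well defined; equivalently, one may pin down $\xi_0$ uniquely by demanding $\xi_0\in\omega(\RR^n)$, using once more the decomposition of Lemma~\ref{lm:LieAlg_so(n)}(4). Everything else is the routine bookkeeping already licensed by the two preceding lemmas.
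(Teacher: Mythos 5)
Your proposal is correct and follows essentially the same route as the paper: decompose $\xi = \omega\xi_0 + \xi_1$ via Lemma \ref{lm:LieAlg_so(n)}(4) and use the identity $\omega v_t(\omega) = \ee^{t\omega} - I_n$ to rewrite $v_t(\omega)\xi$. Your write-up is in fact a little more careful than the paper's (which only displays the computation of $\omega\psi(t)x$ and then asserts the substitution), and your remark on the harmless non-uniqueness of $\xi_0$ is a worthwhile addition the paper omits.
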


\begin{proof}
Since this action satisfies
\begin{displaymath}
\omega \psi(t)x = \ee^{t \omega} (\omega x + \xi) - \xi,
\end{displaymath}
and we have the decomposition $\xi = \omega \xi_0 +\xi_1$
where $\xi_1$ is the kernel of $\omega$
from Lemma \ref{lm:LieAlg_so(n)},
we have (\ref{eq:se_SE}) by 
by substituting $\xi$ into $\psi(t)x$.
\end{proof}

\begin{lemma} 
The map $\exp$ from $\se(n)$ to $\SE(n)$ is surjective.
\end{lemma}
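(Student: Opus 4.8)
The plan is to show surjectivity of $\exp \colon \se(n) \to \SE(n)$ by reducing it to the known surjectivity of $\exp \colon \so(n) \to \SO(n)$ together with an explicit solution of the translational part. Given a target $g \in \SE(n)$ with matrix representation
\begin{displaymath}
g =
\begin{pmatrix}
1 & 0 \\
u & A
\end{pmatrix},
\qquad u \in \RR^n,\ A \in \SO(n),
\end{displaymath}
I want to produce $\fg = \begin{pmatrix} 0 & 0 \\ \xi & \omega \end{pmatrix} \in \se(n)$ with $\ee^{\fg} = g$. By Lemma~\ref{lm:LieAlg}, $\ee^{\fg} = \begin{pmatrix} 1 & 0 \\ v(\omega)\xi & \ee^{\omega} \end{pmatrix}$, so the requirement splits into $\ee^{\omega} = A$ and $v(\omega)\xi = u$.

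First I would invoke the classical fact (e.g.\ \cite{Kn}) that $\exp \colon \so(n) \to \SO(n)$ is surjective, which follows from the fact that every $A \in \SO(n)$ is conjugate to a block-diagonal matrix of planar rotation blocks (and $1$'s), each of which is the exponential of an obvious skew-symmetric generator; hence choose $\omega \in \so(n)$ with $\ee^{\omega} = A$. Second, I must solve $v(\omega)\xi = u$ for $\xi \in \RR^n$. If $\omega$ is invertible this is immediate from Lemma~\ref{lm:LieAlg}, since then $v(\omega) = (\ee^{\omega} - I_n)\omega^{-1}$ is invertible (its inverse being $\omega(\ee^\omega - I_n)^{-1}$, valid whenever $\ee^\omega - I_n$ is invertible, i.e.\ when $1$ is not an eigenvalue of $A$). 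The hard part is precisely the general case in which $\omega$ is singular; there one uses the decomposition of Lemma~\ref{lm:LieAlg_so(n)}(4), $\RR^n = \omega(\RR^n) \oplus \ker\omega$, which is preserved by $\ee^{\omega}$ and by $v(\omega)$.

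Concretely, I would argue as follows. On the subspace $\ker\omega$ we have $\ee^{\omega}|_{\ker\omega} = I$ and $v(\omega)|_{\ker\omega} = I$, so on this block the equation $v(\omega)\xi = u$ forces $\xi$ to equal the $\ker\omega$-component of $u$ — which is consistent and uniquely determined. On the complementary invariant subspace $W := \omega(\RR^n)$, the restriction $\omega|_W$ is invertible, and moreover $\ee^{\omega}|_W$ has no eigenvalue equal to $1$ (an eigenvector of $\ee^\omega$ with eigenvalue $1$ lying in $W$ would, via the spectral correspondence between $\omega$ and $\ee^\omega$, force a nonzero vector of $\ker\omega$ inside $W$, contradicting the direct sum). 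Hence $v(\omega)|_W = (\ee^{\omega}|_W - I)(\omega|_W)^{-1}$ is invertible on $W$, and the $W$-component of $\xi$ is uniquely recovered from the $W$-component of $u$. Assembling the two components gives $\xi$ with $v(\omega)\xi = u$, completing the construction of $\fg$ with $\ee^{\fg} = g$.

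I expect the main obstacle to be the bookkeeping around the singular case: one must be careful that $W = \omega(\RR^n)$ is genuinely $\ee^{\omega}$-invariant and $v(\omega)$-invariant (both follow since $\ee^{\omega}$ and $v(\omega)$ are power series in $\omega$, which obviously preserves its own image and kernel) and that $1 \notin \mathrm{spec}(\ee^{\omega}|_W)$. Everything else is a routine consequence of Lemmas~\ref{lm:LieAlg} and~\ref{lm:LieAlg_so(n)}. Alternatively, one could bypass the eigenvalue discussion by choosing $\omega$ to be block-diagonal in a basis adapted to $W \oplus \ker\omega$ from the outset, so that the matrix identity $v(\omega)\xi = u$ decouples block-by-block and each $2\times 2$ rotation block is handled by the explicit formula for $v$ of a planar rotation generator.
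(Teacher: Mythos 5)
Your overall strategy --- block-diagonalize $A$ to obtain a logarithm $\omega\in\so(n)$, then solve $v(\omega)\xi=u$ for the translational part using the splitting $\RR^n=\omega(\RR^n)\oplus\ker\omega$ of Lemma \ref{lm:LieAlg_so(n)} --- is the same one the paper gestures at (the ``triangulation of the matrix $\omega$'' via the maximal torus of $\SO(n)$, applied to (\ref{eq:se_SE}) at $t=1$), and you carry it out in considerably more detail than the paper's two-line sketch. However, one step as written is wrong: it is not true that $1\notin\mathrm{spec}(\ee^{\omega}|_W)$ for an \emph{arbitrary} $\omega$ with $\ee^{\omega}=A$, and the ``spectral correspondence'' you invoke is fallacious, since an eigenvalue $1$ of $\ee^{\omega}$ corresponds to an eigenvalue $\ii\theta$ of $\omega$ with $\theta\in 2\pi\ZZ$, not necessarily $\theta=0$; such an eigenvector need not lie in $\ker\omega$. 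Concretely, for $n=2$ take $A=I_2$ and $\omega=\begin{pmatrix}0&-2\pi\\ 2\pi&0\end{pmatrix}$: then $\ee^{\omega}=A$, $W=\RR^2$, $\ker\omega=\{0\}$, and $v(\omega)=(\ee^{\omega}-I_2)\omega^{-1}=0$, so $v(\omega)\xi=u$ is unsolvable for $u\neq 0$ even though $g=\begin{pmatrix}1&0\\ u&I_2\end{pmatrix}$ is obviously in the image of $\exp$ (take $\omega=0$, $\xi=u$).

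The gap is repairable, and your closing ``alternative'' is essentially the repair: you must \emph{choose} the logarithm $\omega$ so that the rotation angles of its $2\times2$ blocks all lie in $(0,2\pi)$, equivalently so that $\ker\omega=\ker(A-I_n)$. With that choice $\ee^{\omega}|_W$ genuinely has no eigenvalue $1$, hence $v(\omega)|_W=(\ee^{\omega}|_W-I)(\omega|_W)^{-1}$ is invertible, and the rest of your block-by-block solution of $v(\omega)\xi=u$ (identity on $\ker\omega$, invertible on $W$) goes through. So: right approach and correct skeleton, but surjectivity onto the translational part depends on a specific choice of $\omega$ rather than an arbitrary one, and the justification of the key invertibility claim must be replaced by that explicit choice.
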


\begin{proof}
Though it can be directly proved 
from the Lemma \ref{lm:LieAlg,lm:LieAlg_so(n)},
it is also obtained from (\ref{eq:se_SE}) by setting
$t=1$. 
Then its surjectivity is can be directly
proved by the triangulation of the matrix $\omega$
though it is a little bit complicate due to
the maximal tori in $\SO(n)$.
\end{proof}

\begin{corollary} \label{cor:ERT}
For every pair of congruent $n$-subspaces $M$ and $M'$,
there is a simple euclidean move $\psi(t) = \exp(t \fg)$ of 
$\fg\in\se(n)$ such that $\Phi_{\psi(1)}(M) = M'$.
\end{corollary}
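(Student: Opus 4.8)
The plan is to reduce the statement entirely to the surjectivity of the exponential map $\exp : \se(n) \to \SE(n)$ proved in the lemma immediately preceding. First I would unwind the hypothesis: since $M$ and $M'$ are congruent, by definition there is some $g \in \SE(n)$ with $\Phi_g(M) = M'$. Next, applying surjectivity of $\exp$, I would pick $\fg \in \se(n)$ with $\exp(\fg) = g$, and set $\psi(t) := \exp(t\fg)$ for $t \in [0,1]$.

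It then remains only to observe that $\psi$ is genuinely a simple euclidean move and does the job. The map $t \mapsto \exp(t\fg)$ is continuous (indeed real-analytic) on $[0,1]$, so $\psi \in \cC^0([0,1],\SE(n))$, and $\psi(0) = \exp(0) = e = id$, so the base-point condition is met; since $t\fg \in \se(n)$ for all $t$, this is precisely a simple euclidean move in the sense of the definition, and by Lemma \ref{lmm:RT} its orbit on each $x \in \EE^n$ takes the reduced form \eqref{eq:se_SE} (a rotation about a center, a translation, or a screw motion). Finally, evaluating at $t = 1$ gives $\psi(1) = \exp(\fg) = g$, hence $\Phi_{\psi(1)}(M) = \Phi_g(M) = M'$, which is the assertion.

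I do not anticipate any real obstacle: the whole force of the corollary is carried by the surjectivity lemma, and the only bookkeeping needed is to confirm that $\psi(t) = \exp(t\fg)$ satisfies the (trivial) requirements $\psi(0) = id$ and continuity, both of which are immediate from the properties of $\exp$. If one wanted to be fully self-contained and avoid citing the surjectivity lemma, one could instead run the argument through \eqref{eq:se_SE} directly — decomposing $g = (u, A)$ with $A \in \SO(n)$, writing $A = \ee^{\omega}$ via the maximal-torus normal form for $\SO(n)$ from Lemma \ref{lm:LieAlg_so(n)}, and solving $v_1(\omega)\xi = u$ for $\xi$ using $\omega v_1(\omega) = \ee^{\omega} - I_n$ on $\img\,\omega$ together with the free choice of the $\ker\,\omega$-component of $\xi$ — but invoking the already-proved surjectivity is cleaner.
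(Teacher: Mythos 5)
Your argument is correct and is exactly the one the paper intends: the corollary is stated without proof precisely because it follows immediately from the surjectivity of $\exp:\se(n)\to\SE(n)$ in the preceding lemma, which is how you derive it. The bookkeeping you add (continuity of $t\mapsto\exp(t\fg)$, $\psi(0)=id$, $\psi(1)=g$) is the right and complete justification.
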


\begin{definition} \label{def:PWEM}
The euclidean move $\psi(t) \in \SE(n)$ which
is given by a collection of the simple
euclidean moves 
$\{\exp(t\fg_i) \ |\ \fg_i\in \se(n), t\in [0,1]\}_{i=1,\ldots, \ell}$,
 i.e.,
\begin{displaymath}
\psi(t) = 
\exp\left(
  \frac{t-t_{j-1}}{t_j-t_{j-1}} \fg_j\right)
 \ee^{\fg_{j-1}} \cdots
 \ee^{\fg_2}
 \ee^{\fg_1}
\end{displaymath}
for $t \in 
\left[t_{j-1},t_j\right],
$
where $\displaystyle{t_j:=\frac{j}{\ell}}$,
we call $\psi(t)$ {\rm{the piecewise euclidean move}}.
\end{definition}

\section{Mathematics of Caging}

Let us consider the subspace $\cK \subset \EE^n$ whose codimension is two;
$\cK$ is $(n-2)$-subspace in $\EE^n$.
$\cK$ might be decomposed to $p$ connected parts,
\begin{displaymath}
\cK =\coprod_{i=1}^p \cK_i.
\end{displaymath}
Hereafter $\cK$ is fixed.

\begin{remark}
{\rm{
The caging is to restrict some $n$-subspace in $\EE^n$ using 
the other subspace $\cK$ whose  codimension is two, 
which is modeled after
figures, wires in three dimensional case. 
The subspace $\cK$ is modeled by them.
Thus from a practical viewpoint,
we do not consider wild geometries such as the Hilbert curve
but we neither exclude them in this article.
For such a wild object, some of the following results might
be trivial.
}}
\end{remark}

For given $\cK$, we let $[M]_{\cK^c}$ be the
subset of the configuration space $[M]$ whose element is disjoint to $\cK$,
i.e., $[M]_{\cK^c}:=\{M' \in [M]\ | \ M' \subset \cK^c\}$,
and $\fM_{\cK^c}$ be the family of $[M]_{\cK^c}$.

\begin{definition}
Let $M$ and $M'$ be congruent $n$-subspaces 
in $\cK^c$ i.e., $M$ and $M'$ belongs to $[M]_{\cK^c}
\in \fM_{\cK^c}$, and 
there exists $g\in \SE(n)$ satisfying  $\Phi_g(M)=M'$.
If there is  $\psi \in \cC^0([0,1], \SE(n))$
such that
its congruent family $\{\Phi_{\psi(t)}(M)\}_{t\in [0,1]}$ 
satisfies the conditions,

\begin{enumerate}

\item $ \Phi_{\psi(0)} = id$,

\item $ \Phi_{\psi(1)} = \Phi_g$ and

\item $\Phi_{\psi(t)}(M) \bigcap \cK = \emptyset$ for every $t\in (0,1)$,

\end{enumerate}
we say that $M$ and $M'$ are $\cK^c$-congruent,
denoted by $M \simeq_{\cK^c} M'$,
and $\Phi_{\psi}$ is a $\cK^c$-congruent homotopy
for $M$ and $M'$.
If we cannot find $\psi\in \cC^0([0,1],\SE(n))$
satisfying the above conditions, we say that
$M$ and $M'$ are not $\cK^c$-congruent,
denoted by $M \not\simeq_{\cK^c} M'$.

\end{definition}

We note that the element $[M]_{\cK^c}$ of $\fM_{\cK^c}$ is the set
of the congruent subspaces.

\begin{definition} \label{def:caging}
If every element
$M$ of $[M]_{\cK^c}$ is $\cK^c$-congruent each other, 
we say that $[M]_{\cK^c}$ is a
$\cK^c$-congruent set and
we cannot cage $[M]_{\cK^c}\in \fM_{\cK^c}$ for $\cK$.

If we find a pair $M$ and $M'$ of $[M]_{\cK^c}$ 
which are not $\cK^c$-congruent, we
call $[M]_{\cK^c}$ a complete $\cK^c$-caging set, and 
we say that we can {\rm{completely cage}} $[M]_{\cK^c}\in \fM_{\cK^c}$.
\end{definition}


\begin{proposition}
The moduli of shapes $\fM_{\cK^c}$ is decomposed to
\begin{displaymath}
\fM_{\cK^c}=\fM_{\cK^c}^{(0)} \coprod \fM_{\cK^c}^{(1)}
\end{displaymath}
where $\fM_{\cK^c}^{(0)}$ is the family of the 
$\cK^c$ congruent sets of $\fM_{\cK^c}$
and $\fM_{\cK^c}^{(1)}$ is the family of 
complete $\cK^c$-caging sets.
\end{proposition}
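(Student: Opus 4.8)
The plan is to recognize the claimed decomposition as essentially the law of excluded middle applied to the defining condition in Definition~\ref{def:caging}, and to spend the actual work on checking that this dichotomy is internally consistent. Concretely, I would proceed in three steps: (i) record that, for the fixed $\cK$, the relation $\simeq_{\cK^c}$ is an equivalence relation on each $[M]_{\cK^c}$; (ii) observe that ``$[M]_{\cK^c}$ is a $\cK^c$-congruent set'' is then exactly ``$[M]_{\cK^c}$ is a single $\simeq_{\cK^c}$-class'' while ``$[M]_{\cK^c}$ is a complete $\cK^c$-caging set'' is exactly ``$[M]_{\cK^c}$ splits into at least two $\simeq_{\cK^c}$-classes''; (iii) conclude that these two properties are mutually exclusive and jointly exhaustive, so sorting each $[M]_{\cK^c}\in\fM_{\cK^c}$ according to which one holds yields the asserted disjoint union $\fM_{\cK^c}=\fM_{\cK^c}^{(0)}\coprod\fM_{\cK^c}^{(1)}$.

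For step (i) I would verify the three axioms by the standard manipulations of paths in $\SE(n)$, being careful about the normalization $\Phi_{\psi(0)}=id$ and the open-interval constraint. Reflexivity uses the constant curve $\psi\equiv id$. For symmetry, if $\psi$ is a $\cK^c$-congruent homotopy from $M$ to $M'$ with $\Phi_{\psi(1)}=\Phi_g$, I would take $\tilde\psi(t):=\psi(1-t)\psi(1)^{-1}$, so that $\Phi_{\tilde\psi(0)}=id$, $\Phi_{\tilde\psi(1)}=\Phi_{g^{-1}}$ carries $M'$ back to $M$, and $\Phi_{\tilde\psi(t)}(M')=\Phi_{\psi(1-t)}(M)$ misses $\cK$ for $t\in(0,1)$. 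For transitivity, I would concatenate a homotopy $\psi_1$ from $M$ to $M'$ with one $\psi_2$ from $M'$ to $M''$ after rescaling the parameter, e.g.\ $\psi(t)=\psi_1(2t)$ on $[0,\tfrac12]$ and $\psi(t)=\psi_2(2t-1)\,\psi_1(1)$ on $[\tfrac12,1]$; continuity at $t=\tfrac12$ holds because both branches equal $\psi_1(1)$ there, the orbit avoids $\cK$ on each open half-interval, and at the junction the configuration is exactly $M'\subset\cK^c$. Steps (ii) and (iii) are then purely formal: a family satisfying ``every pair is $\cK^c$-congruent'' cannot also satisfy ``some pair is not $\cK^c$-congruent'', and every $[M]_{\cK^c}$ satisfies one of the two by excluded middle applied to that universally quantified statement.

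I do not expect a genuine obstacle here; the only care required is bookkeeping. The delicate points are that reversing and concatenating homotopies must preserve both $\Phi_{\psi(0)}=id$ and the precise requirement that the orbit meet $\cK$ nowhere on $(0,1)$ --- the transitivity argument hinges on the intermediate configuration $M'$ lying in $\cK^c$ --- and that one should fix a convention for the degenerate case $[M]_{\cK^c}=\emptyset$, which is vacuously $\cK^c$-congruent and hence belongs in $\fM_{\cK^c}^{(0)}$. The substantive difficulty of the theory, namely deciding which of the two strata a given pair $([M],\cK)$ falls into, and the finer classification of $\fM_{\cK^c}^{(1)}$ by the number of simple euclidean moves required to escape, lies beyond this proposition and is taken up afterwards.
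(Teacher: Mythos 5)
Your proof is correct and follows the same route as the paper, which simply states that the decomposition is obvious from Definition~\ref{def:caging}; the dichotomy is indeed just excluded middle applied to the condition ``every pair of elements of $[M]_{\cK^c}$ is $\cK^c$-congruent.'' Your additional verification that $\simeq_{\cK^c}$ is an equivalence relation (reversal and concatenation of the homotopies, with the endpoint configurations lying in $\cK^c$) is sound and a worthwhile sanity check, but it is not strictly needed for the disjoint-union statement itself.
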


\begin{proof}
The decomposition is obvious from the definition.
\end{proof}

\begin{remark}
{\rm{
If $\cK$ of $n=3$ case is a space-filling curve like 
the Hilbert curve such that it is dense in $\EE^n$, 
$\fM_{\cK^c}$ itself is the empty set
though we are not concerned with such a case.
However it should be noted that 
if $\fM_{\cK^c}$ is not empty, $[M]_{\cK^c} \in \fM_{\cK^c}$
has a non-trivial geometrical structure generally. 
In fact,
Fruchard and Zamfirescu considered the similar problem in which 
the $\cK$ is a circle and $[M]_{\cK^c} (\subset [M])$
 is of a convex object
\cite{F,Z},
though in general $[M]$ 
is not convex  from a practical point of view.
}}
\end{remark}

Now in order to find a path from $M$ to $M'$, 
we express the euclidean move $\psi$ in terms of
the piecewise euclidean move in
Definition \ref{def:PWEM}.

\begin{definition} \label{def:ell-redu}
Let $M$ and $M'$ be $\cK^c$-congruent such that
$\Phi_g(M)=M'$ of $g\in \SE(n)$.
If $g$ is decomposed to
the piecewise euclidean move, i.e.,
\begin{displaymath}
g  = g_\ell \circ g_{\ell-1}
\circ \cdots
\circ g_2 \circ g_1
\end{displaymath}
satisfying the conditions;

\begin{enumerate}

\item
For each $g_i$, we have $\fg_i \in \se(n)$
satisfying $g_i = \exp(\fg_i)$ and

\item each $\psi_i \in \cC^0([0,1], \SE(n))$ given by
$\psi_i(t)=\exp(t \fg_i)$ for $t\in[0,1]$ is 
$\cK^c$-congruent homotopy for 
$\Phi_{g_{i-1} \circ \cdots \circ g_2 \circ g_1}(M)$
and
$\Phi_{g_{i} \circ \cdots \circ g_2 \circ g_1}(M)$,

\end{enumerate}
we say that
$\Phi_{g}$ is reduced to $\ell$-$\SE(n)$ action.

If $\Phi_{g}$ is reduced to $\ell$-$\SE(n)$ action but
cannot be reduced to $(\ell-1)$-$\SE(n)$ action,
we say that $\Phi_{g}$ is $\ell$-th type
and $M$ and $M'$ are $\ell$-th $\cK^c$-congruent
if $\Phi_g$ is $\ell$-th type.

Further if 
$M$ and $M'$ are congruent but not $\cK^c$-congruent,
or cannot be $\ell$-th $\cK^c$-congruent of finite $\ell$,
we say that
$M$ and $M'$ are $\infty$-th $\cK^c$-congruent.
\end{definition}

We should note that these components $\psi_i$'s are
given as the rotations at certain points or 
the translations
from Lemma \ref{lmm:RT}.

\begin{proposition}
For given an $n$-subspace $M \subset \cK^c$,
the configuration space $[M]_{\cK^c}$ of $\fM_{\cK^c}$
is decomposed to
\begin{displaymath}
[M]_{\cK^c} = \coprod_{\ell=1}^\infty [M]_{\cK^c}^{(\ell)}
\end{displaymath}
where
$[M]_{\cK^c}^{(\ell)}$ is the set of subspaces 
of the $\ell$-th $\cK^c$-congruence to $M$.
\end{proposition}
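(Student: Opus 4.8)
The plan is to show that the proposed decomposition is both a covering of $[M]_{\cK^c}$ and a disjoint one, and that every piece is well-defined. For the covering: take any $M' \in [M]_{\cK^c}$. By definition of $[M]_{\cK^c}$ there is $g \in \SE(n)$ with $\Phi_g(M) = M'$, and $M, M'$ lie in the same congruent set of $\fM_{\cK^c}$. If $M \simeq_{\cK^c} M'$, then by definition there is a $\cK^c$-congruent homotopy $\Phi_\psi$; I would note that the constant-piece piecewise euclidean move structure of Definition \ref{def:PWEM} together with Corollary \ref{cor:ERT} lets us at least attempt a decomposition into simple euclidean moves, so $M'$ is either $\ell$-th $\cK^c$-congruent to $M$ for some finite $\ell$, or $\infty$-th $\cK^c$-congruent by the last clause of Definition \ref{def:ell-redu}. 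Either way $M' \in [M]_{\cK^c}^{(\ell)}$ for some $\ell \in \{1,2,\dots,\infty\}$. Conversely each $[M]_{\cK^c}^{(\ell)} \subset [M]_{\cK^c}$ since $\ell$-th $\cK^c$-congruence implies $\cK^c$-congruence (the concatenated $\psi_i$ give a $\cK^c$-congruent homotopy), and the $\infty$ case is included in $[M]_{\cK^c}$ by hypothesis on that piece. Hence $[M]_{\cK^c} = \bigcup_\ell [M]_{\cK^c}^{(\ell)}$.

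For disjointness I would argue from the minimality built into the definition of $\ell$-th type: $\Phi_g$ is $\ell$-th type precisely when it is reducible to an $\ell$-$\SE(n)$ action but not to an $(\ell-1)$-$\SE(n)$ action. The key point to pin down is that being $\ell$-th $\cK^c$-congruent to $M$ is a property of the target $M'$ alone (equivalently, of its congruent class), not of a chosen $g$ or $\psi$: I would define $\ell(M')$ as the infimum over all $g$ with $\Phi_g(M) = M'$ and all valid piecewise decompositions of the number of pieces needed, set $\ell(M') = \infty$ when no finite decomposition works or $M \not\simeq_{\cK^c} M'$, and then check this matches Definition \ref{def:ell-redu}. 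Once $\ell(M')$ is a genuine function of $M'$, the sets $[M]_{\cK^c}^{(\ell)} = \{M' : \ell(M') = \ell\}$ are the fibers of this function and are therefore pairwise disjoint with union all of $[M]_{\cK^c}$, which is exactly the claimed coproduct decomposition.

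The main obstacle I expect is the well-definedness in the previous paragraph — specifically, showing that $\ell$-th $\cK^c$-congruence depends only on $M'$ and is independent of which $g \in \SE(n)$ realizes $\Phi_g(M) = M'$ and of which simple-euclidean-move decomposition of $g$ is chosen. The subtlety is that different group elements $g$ may induce the same map $\Phi_g$ on a given $M$ (if $M$ has symmetries), and a given $g$ admits many decompositions into simple moves (the $\exp$ map is surjective but far from injective, as the earlier lemmas on maximal tori in $\SO(n)$ indicate). I would handle this by taking the minimal $\ell$ over \emph{all} realizations, as above, so that the invariant is defined as an infimum and the dependence issue disappears by construction; the content then reduces to checking that this minimal $\ell$ is consistent with the "reducible to $\ell$ but not $(\ell-1)$" phrasing of Definition \ref{def:ell-redu}, which is immediate from the definition of infimum over a subset of $\NN \cup \{\infty\}$. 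A secondary, more routine point is verifying that concatenating the $\cK^c$-congruent homotopies $\psi_1, \dots, \psi_\ell$ of the individual pieces yields an honest element of $\cC^0([0,1], \SE(n))$ avoiding $\cK$ on $(0,1)$ — this follows by reparametrizing each piece onto $[t_{i-1}, t_i]$ and using condition (3) of the $\cK^c$-congruent homotopy definition on each subinterval, with continuity at the junction points guaranteed because consecutive pieces share the endpoint configuration $\Phi_{g_i \circ \cdots \circ g_1}(M)$.
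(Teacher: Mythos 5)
Your proposal is correct, and it actually supplies more than the paper does: the paper states this proposition with no proof at all, implicitly treating it (like the preceding decomposition of $\fM_{\cK^c}$) as obvious from Definition \ref{def:ell-redu}, since that definition assigns to each pair $(M,M')$ either a finite minimal $\ell$ or the value $\infty$, and the pieces $[M]_{\cK^c}^{(\ell)}$ are by construction the level sets of that assignment. The genuinely useful content of your write-up is the well-definedness point you isolate: Definition \ref{def:ell-redu} attaches ``$\ell$-th type'' to a particular $\Phi_g$, whereas the coproduct requires $\ell$ to be an invariant of $M'$ alone, independent of which $g$ realizes $\Phi_g(M)=M'$ (non-unique when $M$ has symmetries) and of which factorization into simple euclidean moves is chosen ($\exp$ being surjective but not injective). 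Your repair --- defining $\ell(M')$ as the infimum over all such realizations, with $\infty$ as the default, and reading the ``reducible to $\ell$ but not $(\ell-1)$'' clause as exactly this infimum --- is the right reading and is what makes the union disjoint. Your secondary check, that concatenating the reparametrized $\psi_i$ yields a continuous path avoiding $\cK$ on the open interval (the junction configurations $\Phi_{g_i\circ\cdots\circ g_1}(M)$ lying in $\cK^c$ by hypothesis), correctly justifies the containment $[M]_{\cK^c}^{(\ell)}\subset[M]_{\cK^c}$ that the paper also leaves tacit. In short: same decomposition the paper intends, but you have filled in the two steps (well-definedness of $\ell$ and closure under concatenation) that the paper's silence glosses over.
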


We, now, state the main theorem,
which should be contrast to  
Corollary \ref{cor:ERT}:
\begin{theorem} \label{thm:fst}
In general $\cK^c$-congruent $n$-subspaces $M$ and $M'$ are not
the first $\cK^c$-congruent.
\end{theorem}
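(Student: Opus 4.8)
The plan is to prove the statement by producing one counterexample: a codimension‑two $\cK$ and a pair of $\cK^c$‑congruent $n$‑subspaces $M,M'$ for which \emph{no} single simple euclidean move can realize $\Phi_g(M)=M'$ within $\cK^c$ in the sense of Definition \ref{def:ell-redu}. I would work in $\EE^2$, where $\cK$ is $0$‑dimensional, and take $\cK=\{p_1,p_2\}$ with $p_1=(1,0)$, $p_2=(-1,0)$ (so $p=2$ in the decomposition of $\cK$). Let $M$ be a thin horizontal rectangular rod $\{\,|x|\le L/2,\ |y-h|\le\tau\,\}$ with $L\ge 2$ and $0<\tau<h<1$, and let $M'$ be its reflection through the origin, i.e.\ the congruent rod centred at $(0,-h)$. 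Both lie in $\cK^c$, since all of their points satisfy $|y|\ge h-\tau>0$.

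First I would verify $M\simeq_{\cK^c}M'$ by exhibiting an explicit piecewise euclidean move (Definition \ref{def:PWEM}) built from three translations: slide $M$ to the right until the whole rod has $x>1$ (this keeps it at height $h>0$, hence clear of $\cK$); translate it straight down by $2h$ (the rod still has $x>1$, so it misses $(\pm1,0)$); slide it back to the left onto $M'$. Each leg is a simple euclidean move disjoint from $\cK$ throughout, so together they form a $\cK^c$‑congruent homotopy.

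The core of the proof is that $M$ and $M'$ are not first $\cK^c$‑congruent. By Lemma \ref{lmm:RT}, together with Lemma \ref{lm:LieAlg_so(n)}(4) (which gives $\ker\omega=0$ for nonzero $\omega\in\so(2)$, so no mixed/screw moves occur in the plane), a simple euclidean move $\psi(t)=\exp(t\fg)$ on $\EE^2$ is either a translation $x\mapsto x+\xi t$ or a rotation through a fixed total angle about a fixed centre. If $\Phi_{\psi(1)}$ is a translation sending $M$ to $M'$, then $\xi=(0,-2h)$ is forced, and at time $t=1/2$ the rod occupies $\{\,|x|\le L/2,\ |y|\le\tau\,\}$, which contains $p_1$ and $p_2$ because $L\ge 2$; this violates condition (3) of a $\cK^c$‑congruent homotopy. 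If $\Phi_{\psi(1)}$ is a rotation, then carrying the horizontal rod $M$ onto the horizontal rod $M'$ forces its total angle to be a multiple of $\pi$; a nonzero even multiple is the identity map (not sending $M$ to $M'$), so it is an odd multiple, $\Phi_{\psi(1)}$ is the point reflection $x\mapsto 2c-x$ about its centre $c$, and matching the centres of the two rods forces $c=(0,0)$. But $M$ contains the point $(\sqrt{1-h^2},h)$, which lies at distance exactly $1$ from $c$ (here $h<1$ and $L/2\ge 1\ge\sqrt{1-h^2}$ are used); as $\psi$ rotates $M$ about $c$ through an angle of absolute value at least $\pi$, this point sweeps an arc of angular length at least $\pi$ on the unit circle about $c$ and therefore passes through the direction of $p_1$ or of $p_2$, so $\Phi_{\psi(t)}(M)\cap\cK\ne\emptyset$ for some $t\in(0,1)$. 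Hence no single simple euclidean move works, so $M$ and $M'$ are $\ell$‑th $\cK^c$‑congruent for some finite $\ell\ge 2$ and in particular not first $\cK^c$‑congruent, the promised contrast with Corollary \ref{cor:ERT}.

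The step I expect to be the main obstacle is the rotation case, because one must simultaneously exclude rotations whose total angle is an arbitrarily large odd multiple of $\pi$ (winding many times about the centre) and rotations whose centre lies arbitrarily far away. The argument above is meant to handle these uniformly: the rigid constraint ``horizontal rod $\mapsto$ horizontal rod, $M\mapsto M'$'' pins down both the parity of the angle and the centre, after which the elementary fact that a rotation preserves distance to its centre forces the collision, using only that an angular sweep of length $\ge\pi$ meets one of two antipodal directions, so that no winding‑number case analysis is needed. I would close with a remark that for $n\ge 3$ one should additionally dispose of the genuinely mixed (screw) moves of Lemma \ref{lmm:RT}(3); since the theorem only asserts failure ``in general'', the planar example already suffices, while an analogous spatial example with $\cK$ a circle and $M$ a ring linked with it recovers the wire‑puzzle picture of the introduction.
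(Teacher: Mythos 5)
Your proposal is correct, and it follows the same basic strategy as the paper --- exhibit a planar counterexample with point obstacles and then invoke the product with $\RR^{n-2}$ (or appeal to ``in general'') for higher dimensions --- but it is substantially more rigorous than what the paper actually writes. The paper's proof consists of pointing at Figure 1 (three dots and an $N$-shaped object) and declaring it ``obvious'' that configuration (a) is not first $\cK^c$-congruent to (d); no verification is given that every single simple euclidean move fails. You instead choose a cleaner example (a long horizontal rod and two points $(\pm1,0)$), explicitly produce the three-translation piecewise move establishing $\cK^c$-congruence, and then carry out the exhaustive case analysis that the paper omits: using Lemma \ref{lm:LieAlg_so(n)}(4) to note that $\ker\omega=0$ for $0\neq\omega\in\so(2)$, so by Lemma \ref{lmm:RT} every simple euclidean move of the plane is a pure translation or a pure rotation about a fixed centre; the forced translation sweeps the rod through both obstacle points at $t=1/2$, and the forced rotation (pinned to a point reflection about the origin by the rod-to-rod rigidity constraint) drags the point $(\sqrt{1-h^2},h)\in M$ along an arc of angular length at least $\pi$ on the unit circle, which necessarily meets one of the two antipodal obstacles at some interior time. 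This is exactly the argument needed to make the paper's ``it is obvious'' legitimate, and your uniform handling of arbitrarily large odd multiples of $\pi$ and of the centre location closes the loopholes a picture cannot. The only caveat, which you yourself flag, is that the extension to $n\ge3$ via products (as the paper claims) would still require ruling out the mixed screw moves of Lemma \ref{lmm:RT}(3); since the theorem is stated as an ``in general'' assertion, the planar example already suffices, and on this point your proof and the paper's are equally (in)complete.
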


\begin{proof}
An example is illustrated in Figure 1 of $n=2$ case,
in which 
dots mean $\cK$ and a $N$-shaped object 
corresponds to $M$.
They are $\cK^c$-congruent to the outer one but it is obvious that
the $N$-shaped object (a) in Figure 1 
 is not  the first $\cK^c$-congruent
to (d).

\begin{figure}[ht]
 \begin{center}
\hskip 1.7cm
\includegraphics[width=7cm]{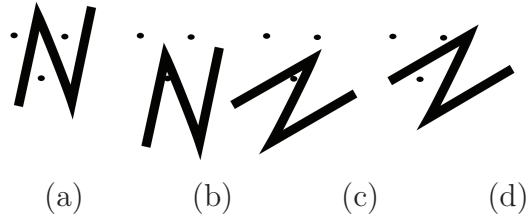}
 \newline
(a) \hskip 1.3cm
(b) \hskip 1.3cm
(c) \hskip 1.3cm
(d)
 \end{center}
\caption{
An example: The
three dots mean $\cK$ and a $N$-shaped object 
corresponds to $M$. (a), (b), (c), and (d)  
show the piecewise euclidean moves of the $N$-shaped object.
}
\label{fig:Hp}
\end{figure}

This example can be extended to $M\times \RR^{n-2}$ and
$\cK\times \RR^{n-2}$.
\end{proof}

\begin{remark}
{\rm{
The decomposition to
$\fM_{\cK^c}^{(0)}$ and $\fM_{\cK^c}^{(1)}$ is related to 
the link problem and the knot theory \cite{Ka}.
Thus $\cK^c$-congruence is a profound problem.
Some of kinds of caging are surely based on the braid group in the 
knot theory when the fundamental group 
$\pi_{1}(M)$ of $[M]$ is not trivial.

However from the practical viewpoint as mentioned in 
Introduction,
we are concerned with the $\ell$-th $\cK^c$-congruence 
rather than the $\cK^c$-congruence itself.
Theorem \ref{thm:fst} means that for 
a $\cK^c$-congruent configuration $[M]_{\cK^c}$, there might be
$M\in [M]_{\cK^c}$ so that we cannot move $M$ to $M' \in [M]_{\cK^c}$
by the simple euclidean move.
It reminds us of parking in a small garage and 
the wire puzzle; 
It is also closely related to the probabilistic 
treatment of connected path in $[M]_{\cK^c}$ \cite{DS,MP}.

Let us consider $M$ and $M'$ are
 $\ell$-th $\cK^c$-congruent, $M \simeq_{\cK^c}M'$.
If $\ell$ is not small, it is not easy to find the $\cK^c$-congruent
homotopy $\Phi_{\psi}$ such that 
$\Phi_{\psi(1)}(M) = M'$.

In the some situations, the $i$-th simple euclidean move in a
piecewise euclidean move is restricted, and thus,
countable in a certain sense, but $\ell$ is not small. 
To find the piecewise euclidean moves connecting 
$M$ and $M'$ is
basically difficult because of the combinatorial explosion.
It is the origin of the difficulty of the wire puzzle.
In other words, caging problem is 
connected with the difficulty of combinatorial problem.

This fact implies that if we want to restrict some geometrical objects
$[M]\in \fM$
by using the figures or something in a daily life, 
we do not need a complete caging in Definition \ref{def:caging}
but we have to find whether it is not small $\ell$ of
$\ell$-th $\cK^c$-congruence.

In other words, roughly speaking,
there is the degree of the difficulty to take $M$ in 
the inside of $\cK$ to the outside of $\cK$, though the inside and the outside
are not rigorous mathematically. 
We need not discriminate between the higher type of $\cK^c$-congruence and
a complete $\cK^c$-caging set in a daily life. 

Though it is very difficult to introduce the measure of
the path space in general, we could measure the difficulty
of caging and the probabilistic treatment of connected path 
in $[M]_{\cK^c}$ \cite{DS,MP}.
}}
\end{remark}

In order to express the practical caging, we introduce
another concept.

\begin{definition}
For given positive integer $\ell_0$, 
if we find $n$-subspaces $M$ and $M'$ in $\cK^c$
such that $M$ and $M'$ are congruent but not
$\ell$-th $\cK^c$-congruent for $\ell < \ell_0$,
we say that $\cK$ dissociates $M$ and $M'$ by $\ell_0$-th caging.
\end{definition}

\begin{remark}
{\rm{
Practically, 
for a given geometrical object $[M]\in \fM$,
it is very important to find  a configuration $\cK$ which
dissociates $M$ and $M'$ by $\ell_0$-th caging.

More practically, the first caging  is much more important
than higher $\ell$-th caging
case because the concerned shape $[M]$ is not so complicate.
Even for the first caging, it is not easy to find whether
it is the first caging or not because the dimension of $\SE(n)$ is
six if $n=3$. Determination of the first caging means the
determination of topological property of $\SE(n)$.
For example, if we reduce the determination of continuous space
to finite problem by expressing concerned area
and $\cK$ in terms of the voxels for 100 points per
one-dimension, we have to deal with $100^6$ data, which is huge;
we cannot deal with it practically in this stage \cite{MOO}.
Thus in order to avoid the problems,
there are several attempts and proposals including 
the C-closure method.
The second named author has investigated the problem 
 using the C-closure concept \cite{WK}.
In another way, intuitive geometrical features such as loop shape
 \cite{PSK} and \textit{double fork and neck} \cite{VKP} help us derive
 sufficient conditions for caging constraint. 
}}
\end{remark}

\noindent
Hiroyasu Hamada\\
National Institute of Technology, Sasebo College,\\
1-1, Okishin-machi, Sasebo, Nagasaki 857-1193, Japan,\\
Institute of Mathematics for Industry, Kyushu University, \\
Motooka 744, Nishi-ku, Fukuoka 819-0395, Japan\\

\noindent
Satoshi Makita\\
National Institute of Technology, Sasebo College,\\
1-1, Okishin-machi, Sasebo, Nagasaki 857-1193, Japan,\\

\noindent
Shigeki Matsutani:\\
National Institute of Technology, Sasebo College,\\
1-1, Okishin-machi, Sasebo, Nagasaki 857-1193, Japan,\\
Institute of Mathematics for Industry, Kyushu University, \\
Motooka 744, Nishi-ku, Fukuoka 819-0395, Japan\\
smatsu@sasebo.ac.jp\\

\begin{thebibliography}{ABC}
\bibitem[AM]{AM}
            \by{R. Abraham and J. E. Marsden}
            \book{Foundations of Mechanics}
            second ed.
            \publ{Addison-Wesley} \publaddr{New York} 1978.

\bibitem[BT]{BT}
            \by{R. Bott and L. W. Tu}
            \book{Differential Forms in Algebraic Topology}
            \publ{Springer} \publaddr{New York} 1982.

\bibitem[C]{C}
	   \by{H.T. Croft}
	   \paper{A Net to Hold a Sphere}
	   \jour{J. London Math. Soc.}
	   \vol{s1-39}
	   \yr{1964}
	   \pages{1--4}.
   
 \bibitem[DS]{DS}
	     \by{R.~Diankov, S.~Srinivasa, D.~Ferguson and J.~Kuffner}
	     \paper{Manipulation Planning with Caging Grasps}
	     \jour{IEEE-RAS Int. Conf. on Humanoid Robots}
	     \yr{2008}
	     \pages{285--292}.

\bibitem[F]{F}
            \by{A. Fruchard}
            \paper{Small holding circles}
            \jour{Geom. Dedicata}
            \vol{157}
            \yr{2012}
            \pages{397--405}.

\bibitem[G]{G}
            \by{H. W. Guggenheimer}
            \book{Differential Geometry}
            \publ{Dover} \publaddr{New York} 1963.
	     
\bibitem[Kn]{Kn}
            \by{A. W. Knapp}
            \book{Representation Theory of Semisimple Groups}
            \publ{Princeton Univ. Press} \publaddr{Princeton} 1986.

\bibitem[Ka]{Ka}
            \by{A. Kawauchi}
            \book{A Survey of Knot Theory}
            \publ{Birkhaeuser} \publaddr{Boston} 1996.


 \bibitem[MOO]{MOO}
	      \by{S.~Makita, T.~Otsubo and Y.~Ohta}
	      \paper{Achievement Test of 3D Multifingered Caging for
	      Rigid Bodies by Exploring Six Dimensional Configuration
	      Space}
	      \jour{IEEE Int. Conf. on Robotics and Automation}
	      \yr{2017}
	      \pages{under review}.
	      
 \bibitem[MP]{MP}
	     \by{T.~Makapunyo, T.~Phoka, P.~Pipattanasomporn,
	   N.~Niparnan and A.~Sudsang}
	     \paper{Measurement Framework of Partial Cage Quality based on Probabilistic Motion Planning}
	     \jour{IEEE Int. Conf. on Robotics and Automation}
	     \yr{2013}
	     \pages{1566--1571}.
	     
 \bibitem[PS]{PS}
	     \by{P.~Pipattanasomporn and A.~Sudsang}
	     \paper{Two-Finger Caging of Nonconvex Polytopes}
	     \jour{IEEE Trans. on Robotics}
	     \vol{27}
	     \yr{2011}
	     \pages{324--333}.

 \bibitem[PSK]{PSK}
	      \by{F.~T.~Pokorny, J.~A.~Stork and D.~Kragic}
	      \paper{Grasping Objects with Holes: A Topological
	      Approach}
	      \jour{IEEE Int. Conf. on Robotics and Automation}
	      \yr{2013}
	      \pages{1100--1107}.
	      
 \bibitem[RB]{RB}
	    \by{E.~Rimon and A.~Blake}
	    \paper{Caging Planar Bodies by One-Parameter Two-Fingered
	    Gripping Systems}
	    \jour{The Int. J. of Robotics Research}
	    \vol{18}
	    \yr{1999}
	    \pages{299--318}.

\bibitem[RMF]{RMF}
            \by{A. Rodriguez, M. T. Mason, and S. Ferry}
            \paper{From Caging to Grasping}
            \jour{Int. J. of Robotics Res.} 
            \vol{31}
            \yr{2011}
	    \pages{886--900}.

 \bibitem[VKP]{VKP}
	      \by{A.~Varava, D.~Kragic and F.~T.~Pokorny}
	      \paper{Caging Grasps of Rigid and Partially Deformable
	      3-D Objects With Double Fork and Neck Features}
	      \jour{IEEE Trans. on Robotics}
	      \vol{32}
	      \yr{2016}
	      \pages{1479--1497}.

 \bibitem[WK]{WK}
	    \by{Z.~Wang and V.~Kumar}
	    \paper{Object closure and manipulation by multiple
	    cooperating mobile robots}
	    \jour{Proc. of IEEE, Int. Conf. Robotics and Automation}
	    \yr{2002}
	    \page{394--399}.

 \bibitem[Z]{Z}
	    \by{T. Zamfirescu}
	    \paper{How to Hold a Convex Body?}
	    \jour{Geom. Dedicate}
	    \vol{54}
	    \yr{1995}
	    \pages{313--316}.
\end{thebibliography}
\end{document}